\newtheorem{theorem}{Theorem}[section]
\theoremstyle{definition}
\newtheorem{definition}[theorem]{Definition}
\theoremstyle{remark}
\newtheorem{remark}[theorem]{Remark}
\numberwithin{equation}{section}
\begin{document}
\setcounter{page}{1}

\title[The  Wodzicki residue  on compact Lie groups]{ The   Wodzicki residue for pseudo-differential operators on compact Lie groups}

\author[D. Cardona]{Duv\'an Cardona}
\address{
  Duv\'an Cardona:
  \endgraf
  Department of Mathematics: Analysis, Logic and Discrete Mathematics
  \endgraf
  Ghent University, Belgium
  \endgraf
  {\it E-mail address} {\rm duvanc306@gmail.com, duvan.cardonasanchez@ugent.be}
  }

\thanks{The author is  supported  by the FWO  Odysseus  1  grant  G.0H94.18N:  Analysis  and  Partial Differential Equations and by the Methusalem programme of the Ghent University Special Research Fund (BOF)
(Grant number 01M01021).
}

     \keywords{Microlocal analysis, non-commutative residue, Compact Lie groups}
     \subjclass[2010]{35S30, 42B20; Secondary 42B37, 42B35}

\begin{abstract}Let $G$ be an  arbitrary compact Lie group.  In this work we apply the  method of the analytic continuation of traces in order to compute the Wodzicki residue for a  classical pseudo-differential operator on $G$  in terms of its matrix-valued symbol (which is globally defined on the non-commutative phase space $G\times \widehat{G},$ with $\widehat{G}$ being the unitary dual of $G$). Our main theorem is  complementary to the results in  \cite{CDR21W}, where we remove the ellipticity hypothesis when the operators belong to the H\"ormander classes on $G$ defined by local coordinate systems.
\end{abstract} 

\maketitle

\allowdisplaybreaks

\section{Introduction}
Let $\Psi^{\bullet}_{\textnormal{cl}}(M):=\cup_m\Psi^{m}_{\textnormal{cl}}(M)$ be the algebra of classical pseudo-differential operators on a closed manifold $M.$ Of particular interest for this work, is the case where  $M=G$ is a compact Lie group endowed with its unique bi-invariant Riemannian metric. Indeed,  although much work has been done when computing regularised traces (as the Wodzicki reside, se e.g. \cite{FedosovGolseLeichtnam,GrubbSchrohe,GrubbSeeley,Lesch,Paycha,PS07,Scrhohe}) on the algebra $\Psi^{\bullet}_{\textnormal{cl}}(M)$,   in this paper we address the problem of computing the (non-commutative) Wodzicki residue on $\Psi^{\bullet}_{\textnormal{cl}}(G),$  in terms of the representation theory of $G,$ where very little appears to be known, see \cite{cdc20,CDR21W,CR20}. 

In order to accomplish our goal, we are going to apply the algebraic formalism developed by Ruzhansky and Turunen \cite{Ruz}, where the  notion of a global symbol has been introduced for describing the  H\"ormander classes of pseudo-differential operators, see H\"ormander \cite{Hormander1985III}. In such a formalism, the symbol of an operator is globally defined on the non-commutative phase space $G\times \widehat{G},$ where $\widehat{G}$ is the unitary dual of $G,$ instead of the  classical local notion of a symbol defined by charts, which is defined on the cotangent space $T^*M.$ In particular the paper \cite{RuzhanskyTurunenWirth2014} by Ruzhansky, Turunen and Wirth is a source of many open problems, between them, the problem of computing geometric invariants of $G$ in terms of the matrix-valued symbols, where the Wodzicki residue is a fundamental one on the list.   

In order to present the contribution of this note, we recall the definition of the Wodzicki residue, which is a trace, that measures the locality of an operator. Indeed, if $A\in \Psi^{m}_{\textnormal{cl}}(M)$ is a classical pseudo-differential operator of order $m,$ its H\"ormander symbol (defined by local coordinate systems) admits a decomposition of the form
\begin{equation*}
    \sigma_H(x,\xi)\sim \sum_{k=0}^{\infty}a_{m-k}(x,\xi),\,(x,\xi)\in T^{*}M,
\end{equation*}in components  $a_{m-j},$ of homogeneous  degree $m-j$ at $\xi\neq 0.$ Then,  Wodzicki in his seminal paper \cite{Wodzicki}  proved that, up to a constant,  the functional
\begin{equation}\label{res:A:Cl}
    \textnormal{res}(A):=\frac{1}{n(2\pi)^n}\smallint\limits_{M\times\{\xi\in T^*_xM:|\xi|=1\}}a_{-n}(x,\xi)|d\xi\,dx|
\end{equation} is the unique trace on the algebra of classical pseudo-differential operators $\Psi^{\bullet}_{\textnormal{cl}}(M):=\cup_m\Psi^{m}_{\textnormal{cl}}(M)$. It is a remarkable fact that, although the complete symbol does not have an invariant meaning, the right hand side of \eqref{res:A:Cl} is well-defined. Indeed, the invariance of the Wodzicki residue shows that $x\mapsto \textnormal{res}_{x}(A):M\rightarrow\mathbb{C}$ is a density on $M,$ where  $\textnormal{res}_{x}(A):=\frac{1}{n(2\pi)^n}\smallint_{\{\xi\in T^*_xM:|\xi|=1\}}a_{-n}(x,\xi)d\xi.$ In the picture below (see Figure 1) we illustrate how any open covering of $M$ allows us to define $\textnormal{res}_{x}(A)$ in a local way and how this functional can be glued over the whole manifold.

\begin{figure}[h]
\includegraphics[width=4.5cm]{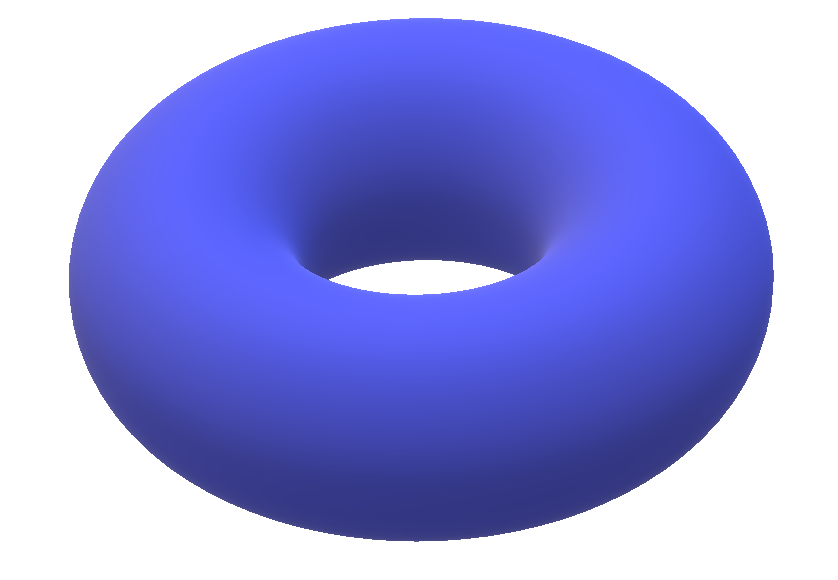}\includegraphics[width=5cm]{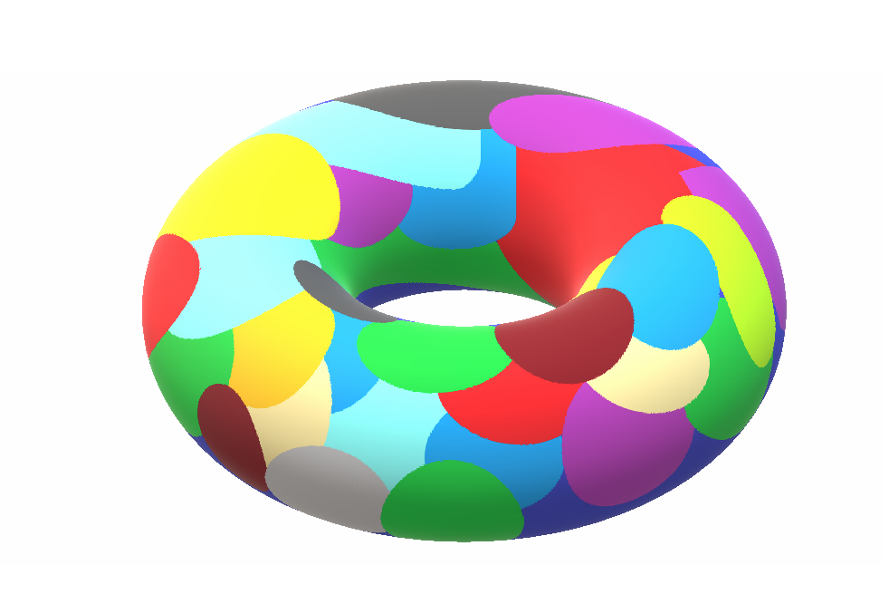}\includegraphics[width=6cm]{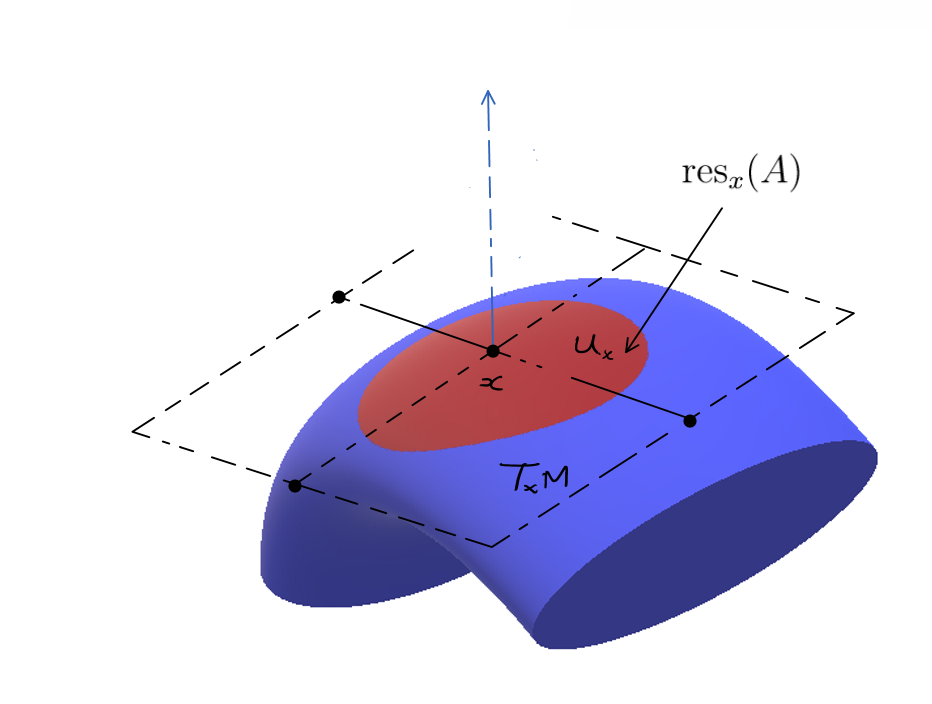}\\ \label{Figure 1.}Figure 1.
\centering
\end{figure}

There are several ways of computing the Wodzicki residue. There is a global spectral approach which shows the delicate relation between $\zeta$-functions and heat kernel expansions that is as follows. For $A\in \Psi^{m}_{\textnormal{cl}}(M),$ and for any elliptic operator $E\in \Psi^{e}(M),$ of positive order $e>0,$ we have the complex formula (see e.g. \cite{Lesch,Paycha,Scott})
\begin{equation}
    \textnormal{res}(A)=\frac{1}{n(2\pi)^n}e\cdot\textnormal{res}_{z=0}\textnormal{Tr}(AP^{-z}),
\end{equation}and the identity via the heat kernel
\begin{equation}
    \textnormal{res}(A)=c_{n,m} \textnormal{ coefficient of }\log(t)\textnormal{ in the  expansion of }\textnormal{Tr}(Ae^{-tP}),\textnormal{ as }t\rightarrow 0^{+},
\end{equation}where $c_{n,m}:=-{m}/{n(2\pi)^n}.$
Our main Theorem \ref{WRGroup:1}, proved by using the method of the analytic continuation of traces,  removes the ellipticity hypothesis   in \cite{CDR21W} when computing the Wodzicki residue for the H\"ormander classes defined by charts,  where the results were derived by using the real variable methods of the spectral calculus of subelliptic operators. 

Now we present our main result. 
For a bounded operator $T$ on a separable Hilbert space $H$, we use the following notation for the decomposition of $T$ into its real and imaginary part,
\begin{equation*}
    \textnormal{Re}(T):=\frac{T+T^*}{2},\,\, \textnormal{Im}(T):=\frac{T-T^*}{2i},
\end{equation*}and the decomposition of $\textnormal{Re}(T)$ and $\textnormal{Im}(T)$ into their positive and negative parts,
\begin{eqnarray*}
  \textnormal{Re}(T)^{+}:=\frac{\textnormal{Re}(T)+|\textnormal{Re}(T)|}{2},\,\, \textnormal{Re}(T)^{-}:=\frac{|\textnormal{Re}(T)|-\textnormal{Re}(T)}{2},
\end{eqnarray*}
and 
\begin{eqnarray*}
  \textnormal{Im}(T)^{+}:=\frac{\textnormal{Im}(T)+|\textnormal{Im}(T)|}{2},\,\, \textnormal{Im}(T)^{-}:=\frac{|\textnormal{Im}(T)|-\textnormal{Im}(T)}{2},
\end{eqnarray*} 
the mapping $\sigma_A:(x,[\xi])\in G\times \widehat{G}\mapsto\sigma_{A}(x,[\xi])$ denotes the (global) matrix-valued symbol of a pseudo-differential  operator $A$ in the algebraic formalism of Ruzhansky and Turunen \cite{Ruz}, and $\Vert L\Vert_{\mathcal{L}^{(1,\infty)}}(\widehat{G})$ denotes   the weak-$\ell^1$ quasi-norm of a matrix-valued function $L$  on the unitary dual $\widehat{G},$ see Subsection \ref{weakl1space} for details.

\begin{theorem}\label{WRGroup:1} Let $m\in \mathbb{R}$ and let   $A\in \Psi^{m}_{\textnormal{cl}}(G)$ be a classical  pseudo-differential  operator on $G$. Then, the matrix-valued symbol of $A$ admits an asymptotic expansion
\begin{equation}\label{Eq:1}
  (x,[\xi])\mapsto \sigma_A(x,[\xi])\sim \sum_{k=0}^{\infty}\sigma_{m-k}(x,[\xi]): G\times \widehat{G}\rightarrow \bigcup_{\ell\in \mathbb{N}}\textnormal{End}[\mathbb{C}^{\ell}],\,
\end{equation}in components with decreasing order, which means that, for any $N\in \mathbb{N},$
\begin{equation}
     \sigma_A(x,[\xi])- \sum_{k=0}^{N}\sigma_{m-k}(x,[\xi])\in S^{-(N+1)+m}_{\textnormal{cl}}(G\times \widehat{G}),
\end{equation}
and the Wodzicki residue of $A$ can be computed according to the formula
\begin{eqnarray*}
     \textnormal{res}(A)=\smallint\limits_{G}\left(\Vert \textnormal{Re}(\sigma_{-n}(x,[\xi]))^{+} \Vert_{\mathcal{L}^{(1,\infty)}(\widehat{G})}-\Vert \textnormal{Re}(\sigma_{-n}(x,[\xi]))^{-} \Vert_{\mathcal{L}^{(1,\infty)}(\widehat{G})}
     \right)dx\\
    +i\smallint\limits_{G}\left(\Vert \textnormal{Im}(\sigma_{-n}(x,[\xi]))^{+} \Vert_{\mathcal{L}^{(1,\infty)}(\widehat{G})}-\Vert \textnormal{Im}(\sigma_{-n}(x,[\xi]))^{-} \Vert_{\mathcal{L}^{(1,\infty)}(\widehat{G})}
    \right)dx.
\end{eqnarray*}
\end{theorem}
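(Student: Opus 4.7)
My approach would combine the standard correspondence between local (Hörmander) symbols and global (Ruzhansky–Turunen) matrix-valued symbols on a compact Lie group with the analytic continuation of traces, in the spirit of Connes. First I would establish \eqref{Eq:1}. On a compact Lie group, the two notions of symbol are equivalent on each H\"ormander class, and the equivalence respects the order filtration; starting from the classical local expansion $\sigma_H(x,\xi) \sim \sum_k a_{m-k}(x,\xi)$, each positively homogeneous term $a_{m-k}$ of degree $m-k$ corresponds to a matrix-valued component $\sigma_{m-k}(x,[\xi])$ of global order $m-k$, and the asymptotic remainders transfer accordingly into the global calculus.

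Next, I would fix an elliptic reference operator $P = (1-\Delta_G)^{1/2} \in \Psi^{1}(G)$, so that $z \mapsto \textnormal{Tr}(AP^{-z})$ extends meromorphically to $\mathbb{C}$. By the complex formula recalled in the introduction (with $e=1$),
\[
\textnormal{res}(A) \;=\; \frac{1}{n(2\pi)^n}\,\textnormal{res}_{z=0}\textnormal{Tr}(AP^{-z}).
\]
Only the component $\sigma_{-n}$ of the symbol of $A$ contributes to this residue: higher-order pieces become trace class after insertion of $P^{-z}$ with no pole at $z=0$, while lower-order pieces already belong to the trace ideal. This reduces the problem to computing the residue of $\textnormal{Tr}(BP^{-z})$ where $B$ has global symbol $\sigma_{-n}(x,[\xi])$.

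The key remaining step is to convert this residue into a weak-$\ell^1$ quasi-norm. Classical operators of order $-n$ on a compact $n$-manifold lie in the weak trace ideal $\mathcal{L}^{(1,\infty)}$, and by Connes' trace theorem the residue above coincides (up to the normalization $1/n(2\pi)^n$) with the Dixmier trace of $B$. Transporting this to the global calculus on $G$, the Dixmier trace of a non-negative operator with symbol $\sigma_{-n}(x,[\xi])$ can be computed by integrating in $x\in G$ the weak-$\ell^1$ quasi-norm of $\sigma_{-n}(x,[\xi])$ on $\widehat{G}$ against the Plancherel measure. Since the Dixmier-trace/weak-$\ell^1$ identification is formulated for positive operators, I would decompose
\[
\sigma_{-n}=\textnormal{Re}(\sigma_{-n})+i\,\textnormal{Im}(\sigma_{-n}),\qquad \textnormal{Re}(\sigma_{-n})=\textnormal{Re}(\sigma_{-n})^{+}-\textnormal{Re}(\sigma_{-n})^{-},
\]
and analogously for the imaginary part, apply the formula to each of the four positive pieces, and then assemble them into the expression stated in Theorem \ref{WRGroup:1}.

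The main obstacle lies in this last step: showing that the weak-$\ell^1$ quasi-norm on $\widehat{G}$ of $\textnormal{Re}(\sigma_{-n}(x,[\xi]))^{\pm}$ correctly encodes the classical cosphere integral $\int_{S^*_x G} a_{-n}(x,\xi)\,d\xi$ appearing in Wodzicki's formula. This demands a careful comparison between the Plancherel measure on $\widehat{G}$ and integration over the cosphere bundle in the high-frequency regime, control of the commutators arising when one swaps $B$ past $P^{-z}$ without disturbing the residue at $z=0$, and verification that the non-self-adjointness of $A$ is correctly resolved by the real/imaginary and positive/negative decomposition.
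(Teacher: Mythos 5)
Your overall architecture matches the paper's: the expansion \eqref{Eq:1} via the equivalence of the local and global calculi, the reduction to the order $-n$ component, the zeta-function/analytic-continuation characterisation of the residue, and the real/imaginary and positive/negative decomposition to reduce to positive self-adjoint operators. However, the step you yourself flag as ``the main obstacle'' is precisely the mathematical content of the theorem, and your proposal does not resolve it; as written this is a genuine gap. You assert that the Dixmier trace of a non-negative operator with symbol $\sigma_{-n}(x,[\xi])$ equals the integral in $x$ of the weak-$\ell^1$ quasi-norm of $\sigma_{-n}(x,\cdot)$, but that is exactly the formula to be proved, and the route you sketch for it (a direct high-frequency comparison of the Plancherel measure on $\widehat{G}$ with the cosphere integral, plus commutator control when moving $B$ past $P^{-z}$) is neither carried out nor the way the difficulty is actually overcome.

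The missing idea is the freezing of coefficients. For each $x\in G$ one introduces the left-invariant operator $A_x:=\textnormal{Op}([\xi]\mapsto \sigma(x,[\xi]))$ and observes that for $\textnormal{Re}(z)<0$ one has the exact identity
\begin{equation*}
f(z):=\textnormal{Tr}[A(1+\mathcal{L}_G)^{\frac{z}{2}}]=\smallint\limits_G \textnormal{Tr}[A_x(1+\mathcal{L}_G)^{\frac{z}{2}}]\,dx=\smallint\limits_G f(x,z)\,dx,
\end{equation*}
since the trace is the integral over $x$ of $\sum_{[\xi]}d_\xi\textnormal{Tr}[\sigma(x,[\xi])\langle\xi\rangle^{z}]$. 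By uniqueness of analytic continuation and Fubini on a small circle $|z|=\varepsilon$ around the origin, this yields $\textnormal{res}(A)=\smallint_G\textnormal{res}(A_x)\,dx$, reducing everything to \emph{left-invariant} positive operators, for which the identity $\textnormal{res}(A_x)=\Vert\sigma(x,\cdot)\Vert_{\mathcal{L}^{(1,\infty)}(\widehat{G})}$ is the already-established Theorem 1.1 of \cite{cdc20}; no direct comparison with the cosphere integral is ever needed. You would also need to justify, as the paper does by invoking the functional calculus of \cite{CR20}, that the principal symbols of $\textnormal{Re}(A)^{\pm}$ and $\textnormal{Im}(A)^{\pm}$ are $\textnormal{Re}(\sigma_{-n})^{\pm}$ and $\textnormal{Im}(\sigma_{-n})^{\pm}$ modulo lower-order terms; this is not automatic, because $|\textnormal{Re}(A)|$ is defined by operator functional calculus rather than symbolwise.
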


This paper is organised as follows. In Section \ref{preliminaries} we record the preliminaries of this work. More precisely, Subsection \ref{S2.1} is dedicated to presenting the basics on the H\"ormander classes defined by local coordinate systems. In Subsection \ref{RT:formalism} we record the construction of the global matrix-valued symbols in the Ruzhansky-Turunen formalism. In Subsection  \ref{weakl1space} we present the weak $\ell^1$-space on the unitary dual $\widehat{G}$ that we will use in Section \ref{Proof:main:the} in order to prove our main Theorem \ref{WRGroup:1}.

\section{Preliminaries}\label{preliminaries}

\subsection{Pseudo-differential operators via localisations}\label{S2.1}
 Pseudo-differential operators on compact manifolds, and consequently on compact Lie groups, can be defined by using local coordinate charts,  see H\"ormander \cite{Hormander1985III}. Let us briefly  introduce these  classes starting with the definition in the Euclidean setting. Let $U$ be an open  subset of $\mathbb{R}^n.$ We  say that  the ``symbol" $a\in C^{\infty}(U\times \mathbb{R}^n, \mathbb{C})$ belongs to the H\"ormander class of order $m$ and of $(\rho,\delta)$-type, $S^m_{\rho,\delta}(U\times \mathbb{R}^n),$ $0\leqslant \rho,\delta\leqslant 1,$ if for every compact subset $K\subset U$ and for all $\alpha,\beta\in \mathbb{N}_0^n$, the symbol inequalities
\begin{equation*}
  |\partial_{x}^\beta\partial_{\xi}^\alpha a(x,\xi)|\leqslant C_{\alpha,\beta,K}(1+|\xi|)^{m-\rho|\alpha|+\delta|\beta|},
\end{equation*} hold true uniformly in $x\in K$ for all  $\xi\in \mathbb{R}^n.$ Then, a continuous linear operator $A:C^\infty_0(U) \rightarrow C^\infty(U)$ 
is a pseudo-differential operator of order $m$ of  $(\rho,\delta)$-type, if there exists
a symbol $a\in S^m_{\rho,\delta}(U\times \mathbb{R}^n)$ such that
\begin{equation*}
    Af(x)=\smallint\limits_{\mathbb{R}^n}e^{2\pi i x\cdot \xi}a(x,\xi)(\mathscr{F}_{\mathbb{R}^n}{f})(\xi)d\xi,
\end{equation*} for all $f\in C^\infty_0(U),$ where
$$
    (\mathscr{F}_{\mathbb{R}^n}{f})(\xi):=\smallint\limits_Ue^{-i2\pi x\cdot \xi}f(x)dx
$$ is the  Euclidean Fourier transform of $f$ at $\xi\in \mathbb{R}^n.$ 

Once the definition of H\"ormander classes on open subsets of $\mathbb{R}^n$ is established, it can be extended to smooth manifolds as follows.  Given a $C^\infty$-manifold $M,$ a linear continuous operator $A:C^\infty_0(M)\rightarrow C^\infty(M) $ is a pseudo-differential operator of order $m$ of $(\rho,\delta)$-type, with $ \rho\geqslant   1-\delta, $ and $0\leq \delta<\rho\leq 1,$  if for every local  coordinate patch $\omega: M_{\omega}\subset M\rightarrow U_{\omega}\subset \mathbb{R}^n,$
and for every $\phi,\psi\in C^\infty_0(U_\omega),$ the operator
\begin{equation*}
    Tu:=\psi(\omega^{-1})^*A\omega^{*}(\phi u),\,\,u\in C^\infty(U_\omega),\footnote{As usually, $\omega^{*}$ and $(\omega^{-1})^*$ are the pullbacks, induced by the maps $\omega$ and $\omega^{-1}$ respectively.}
\end{equation*} is a standard pseudo-differential operator with symbol $a_T\in S^m_{\rho,\delta}(U_\omega\times \mathbb{R}^n).$ In this case we write $A\in \Psi^m_{\rho,\delta}(M,\textnormal{loc}).$ In particular for $(\rho,\delta)=(1,0)$ we denote $\Psi^{m}(M):=\Psi^m_{1,0}(M,\textnormal{loc}).$ 

\subsection{The global symbol in the Ruzhansky-Turunen formalism}\label{RT:formalism}
Let  $A$ be a continuous linear operator from $C^\infty(G)$ into $C^\infty(G),$ and let  $\widehat{G}$ be  the algebraic unitary dual of $G.$ Then, it was established by  Ruzhansky and  Turunen in \cite{Ruz} that there is a function \begin{equation}\label{symbol}\sigma_A:G\times \widehat{G}\rightarrow \cup_{\ell\in \mathbb{N}} \mathbb{C}^{\ell\times \ell},\end{equation}  that we call the matrix-valued symbol of $A,$ such that $\sigma_A(x,\xi):=\sigma_A(x,[\xi])\in \mathbb{C}^{d_\xi\times d_\xi}$ for every equivalence class $[\xi]\in \widehat{G},$ where $\xi:G\rightarrow \textnormal{Hom}(H_{\xi}),$ $H_{\xi}\cong \mathbb{C}^{d_\xi},$ and such that
\begin{equation}\label{RuzhanskyTurunenQuanti}
    Af(x)=\sum_{[\xi]\in \widehat{G}}d_\xi{\textnormal{Tr}}[\xi(x)\sigma_A(x,\xi)\widehat{f}(\xi)],\,\,\forall f\in C^\infty(G).
\end{equation}We have denoted by
\begin{equation*}
    \widehat{f}(\xi)\equiv (\mathscr{F}f)(\xi):=\smallint\limits_{G}f(x)\xi(x)^*dx\in  \mathbb{C}^{d_\xi\times d_\xi},\,\,\,[\xi]\in \widehat{G},
\end{equation*} the Fourier transform of $f$ at $\xi\cong(\xi_{ij})_{i,j=1}^{d_\xi},$ where the matrix representation of $\xi$ is induced by an orthonormal basis of the representation space $H_{\xi}.$
The function $\sigma_A$ in \eqref{symbol} satisfying \eqref{RuzhanskyTurunenQuanti} is unique, and satisfies the identity
\begin{equation*}
    a(x,\xi)=\xi(x)^{*}(A\xi)(x),\,\, A\xi:=(A\xi_{ij})_{i,j=1}^{d_\xi},\,\,\,[\xi]\in \widehat{G}.
\end{equation*}
Note that the previous identity is well defined. Indeed, it is well known that the functions $\xi_{ij}$, which are of $C^\infty$-class, are the eigenfunctions of the positive Laplace operator $\mathcal{L}_G$, that is $\mathcal{L}_G\xi_{ij}=\lambda_{[\xi]}\xi_{ij}$ for some non-negative real number $\lambda_{[\xi]}\geq 0$
depending  only of the equivalence class $[\xi]$ and not on the representation $\xi$.

In general, we refer to the function $a$ as the (global or full) {\it{symbol}} of the operator $A,$ and we will use the notation $A=\textnormal{Op}(a)$ to indicate that $a:=\sigma_A$ is the symbol associated with the operator $A.$ We will use the notation
\begin{align*}
    S^{m}_{\textnormal{cl}}(G\times \widehat{G}):=\{\sigma_A:G\times \widehat{G}\rightarrow\bigcup_{\ell}\mathbb{C}^{\ell\times \ell}|\, A\in \Psi^{m}_{\textnormal{cl}}(G)\},
\end{align*}for the class of matrix-valued symbols of the classical operators of order $m\in \mathbb{R}$ on $M.$

One of the main contributions of the works   \cite{Ruz,RuzhanskyTurunenWirth2014} was to use the notion of {\it{difference operators}}, which endows $\widehat{G}$ with a difference structure making possible to classify the H\"ormander classes \cite{Hormander1985III}.  
Using the class of functions
 \begin{equation*}
    \Sigma(\widehat{G}):=\{ \sigma:  \widehat{G}\rightarrow \cup_{\ell\in \mathbb{N}}\mathbb{C}^{\ell\times \ell}\},
\end{equation*}the space of distributions on $\widehat{G}$ is exactly the set
\begin{equation}
    \mathscr{D}'(\widehat{G}):=\{\sigma\in  \Sigma(\widehat{G}):\exists K\in \mathscr{D}'({G}) \textnormal{ such that }\sigma=\widehat{K}  \},
\end{equation}where $\mathscr{D}'({G}) $ is the topological dual of $C^{\infty}(G)$ endowed with its natural Fr\'echet structure. Moreover, The Schwartz class of distributions on $\widehat{G}$ is defined via
\begin{equation}
    \mathscr{S}(\widehat{G}):=\{\sigma\in  \Sigma(\widehat{G}):\exists K\in C^{\infty}(G) \textnormal{ such that }\sigma=\widehat{K}  \}.
\end{equation}

Following  \cite{RuzhanskyTurunenWirth2014}, we will say that a difference operator $Q_\xi: \mathscr{D}'(\widehat{G})\rightarrow \mathscr{D}'(\widehat{G})$ is of order $k$ if

\begin{equation}\label{taylordifferences}
    Q_\xi\widehat{f}(\xi)=\widehat{qf}(\xi),\,[\xi]\in \widehat{G},
\end{equation}  for some function $q$ vanishing of order $k$ at the neutral element $e=e_G.$ We will denote by $\textnormal{diff}^k(\widehat{G})$  the class of all difference operators of order $k.$ For a  fixed smooth function $q,$ the associated difference operator will be denoted by $\Delta_q\equiv Q_\xi.$ A system  of difference operators (see e.g. \cite{RuzhanskyWirth2015})
\begin{equation*}
  \Delta_{\xi}^\alpha:=\Delta_{q_{(1)}}^{\alpha_1}\cdots   \Delta_{q_{(i)}}^{\alpha_{i}},\,\,\alpha=(\alpha_j)_{1\leqslant j\leqslant i}, 
\end{equation*}
with $i\geq n$, is called   admissible  if, for any orthonormal basis $$X=\{X_1,X_2,\cdots,X_n\}$$ of the Lie algebra $\mathfrak{g},$ and for its respective gradient $\nabla_{X}=(X_1,\cdots,X_n),$ we have that
\begin{equation*}
    \textnormal{rank}\{\nabla_X q_{(j)}(e):1\leqslant j\leqslant i \}=\textnormal{dim}(G), \textnormal{   and   }\Delta_{q_{(j)}}\in \textnormal{diff}^{1}(\widehat{G}).
\end{equation*}
Then, it is useful to introduce an admissible collection of difference operators, which is admissible and additionally, 
\begin{equation*}
    \bigcap_{j=1}^{i}\{x\in G: q_{(j)}(x)=0\}=\{e_G\}.
\end{equation*}

\begin{remark}\label{remarkD}
Matrix components of unitary representations induce difference operators, \cite{RuzhanskyWirth2015}. Indeed, if $\xi_{1},\xi_2,\cdots, \xi_{k},$ are  fixed irreducible and unitary  representation of $G$, which not necessarily belong to the same equivalence class, then each coefficient of the matrix
\begin{equation}
 \xi_{\ell}(g)-I_{d_{\xi_{\ell}}}=[\xi_{\ell}(g)_{ij}-\delta_{ij}]_{i,j=1}^{d_{\xi_\ell}},\, \quad g\in G, \,\,1\leq \ell\leq k,
\end{equation} 
that is each function 
$q^{\ell}_{ij}(g):=\xi_{\ell}(g)_{ij}-\delta_{ij}$, $ g\in G,$ defines a difference operator
\begin{equation}\label{Difference:op:rep}
    \mathbb{D}_{\xi_\ell,i,j}:=\mathscr{F}(\xi_{\ell}(g)_{ij}-\delta_{ij})\mathscr{F}^{-1}.
\end{equation}
We can fix $k\geq \mathrm{dim}(G)$ of these representations in such a way that the corresponding  family of difference operators is admissible, that is, 
\begin{equation*}
    \textnormal{rank}\{\nabla q^{\ell}_{i,j}(e):1\leqslant \ell\leqslant k \}=\textnormal{dim}(G).
\end{equation*}
To define higher order difference operators of this kind, let us fix a unitary irreducible representation $\xi_\ell$.
Since the representation is fixed we omit the index $\ell$ of the representations $\xi_\ell$ in the notation that will follow.
Then, for any given multi-index $\alpha\in \mathbb{N}_0^{d_{\xi_\ell}^2}$, with 
$|\alpha|=\sum_{i,j=1}^{d_{\xi_\ell}}\alpha_{i,j}$, we write
$$\mathbb{D}^{\alpha}:=\mathbb{D}_{1,1}^{\alpha_{11}}\cdots \mathbb{D}^{\alpha_{d_{\xi_\ell},d_{\xi_\ell}}}_{d_{\xi_\ell}d_{\xi_\ell}}
$$ 
for a difference operator of order $|\alpha|$.
%for all $\alpha\in\mathbb{N}^n_0$
%omitting the index $\ell$ of the representations  $\xi_\ell$ when the representation is fixed, and from a sequence $$\mathbb{D}_1=\mathbb{D}_{\xi,j_1,i_1},\cdots, \mathbb{D}_n=\mathbb{D}_{\xi,j_n,i_n}$$ of operators of this type, we define $$\mathbb{D}^{\alpha}:=\mathbb{D}_{1}^{\alpha_1}\cdots \mathbb{D}^{\alpha_n}_n,$$ for all $\alpha\in\mathbb{N}^n_0$. 
\end{remark}

We are now going to introduce the global H\"ormander classes of symbols defined in \cite{Ruz}.
First let us recall that every left-invariant vector field  $Y\in\mathfrak{g}$ can be identified with the first order  differential operator $\partial_{Y}:C^\infty(G)\rightarrow \mathscr{D}'(G)$  given by
 \begin{equation*}
   \partial_{Y}f(x)=  (Y_{x}f)(x)=\frac{d}{dt}f(x\exp(tY) )|_{t=0}.
 \end{equation*}If $\{X_1,\cdots, X_n\}$ is a basis of the Lie algebra $\mathfrak{g},$ then we will use the standard multi-index notation
 $$ \partial_{X}^{\alpha}=X_{x}^{\alpha}=\partial_{X_1}^{\alpha_1}\cdots \partial_{X_n}^{\alpha_n},     $$
 for a canonical left-invariant differential operator of order $|\alpha|.$

By using this property, together with the following notation for the so-called  elliptic weight $$\langle\xi \rangle:=(1+\lambda_{[\xi]})^{1/2},\,\,[\xi]\in \widehat{G},$$ we can finally give the definition of global symbol classes, as finally it was adopted in \cite{SGI}.
\begin{definition}Let $G$ be a compact Lie group and let $0\leqslant \delta,\rho\leqslant 1.$ Let $$\sigma:G\times \widehat{G}\rightarrow \bigcup_{[\xi]\in \widehat{G}}\mathbb{C}^{d_\xi\times d_\xi},$$ be a matrix-valued function such that for any $[\xi]\in \widehat{G},$ $\sigma(\cdot,[\xi])$ is of $C^\infty$-class, and such that, for any given $x\in G$ there is a distribution $k_{x}\in \mathscr{D}'(G),$ smooth in $x,$ satisfying $\sigma(x,\xi)=\widehat{k}_{x}(\xi),$ $[\xi]\in \widehat{G}$. We say that $\sigma \in \mathscr{S}^{m}_{\rho,\delta}(G)$ if the following symbol inequalities 
\begin{equation}\label{HormanderSymbolMatrix}
   \Vert \partial_{X}^\beta \Delta_\xi^{\gamma} \sigma_A(x,\xi)\Vert_{\textnormal{op}}\leqslant C_{\alpha,\beta}
    \langle \xi \rangle^{m-\rho|\gamma|+\delta|\beta|},
\end{equation} are satisfied for all $\beta$ and  $\gamma $ multi-indices and for all $(x,[\xi])\in G\times \widehat{G}.$ For $\sigma_A\in \mathscr{S}^m_{\rho,\delta}(G)$ we will write $A\in\Psi^m_{\rho,\delta}(G)\equiv\textnormal{Op}(\mathscr{S}^m_{\rho,\delta}(G)).$
\end{definition}
The global H\"ormander classes on compact Lie groups can be used to describe the H\"ormander classes defined by local coordinate systems. This is one of the depth applications of the Ruzhansky-Turunen formalism. We present the corresponding statement as follows. 
\begin{theorem}[Equivalence of classes, \cite{Ruz,RuzhanskyTurunenWirth2014}] Let $A:C^{\infty}(G)\rightarrow\mathscr{D}'(G)$ be a continuous linear operator and let $0\leq \delta<\rho\leq 1,$ with $\rho\geq 1-\delta.$ Then, $A\in \Psi^m_{\rho,\delta}(G,\textnormal{loc}),$ if and only if $\sigma_A\in \mathscr{S}^m_{\rho,\delta}(G).$ Consequently,
\begin{equation}\label{EQequivalence}
   \textnormal{Op}(\mathscr{S}^m_{\rho,\delta}(G))= \Psi^m_{\rho,\delta}(G,\textnormal{loc}),\,\,\,0\leqslant \delta<\rho\leqslant 1,\,\rho\geqslant   1-\delta.
\end{equation}
\end{theorem}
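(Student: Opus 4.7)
The plan is to prove the two inclusions $\textnormal{Op}(\mathscr{S}^{m}_{\rho,\delta}(G))\subseteq \Psi^{m}_{\rho,\delta}(G,\textnormal{loc})$ and $\Psi^{m}_{\rho,\delta}(G,\textnormal{loc})\subseteq \textnormal{Op}(\mathscr{S}^{m}_{\rho,\delta}(G))$ separately, using as a bridge the right-convolution kernel $R_{A}(x,\cdot)$ determined by $Af(x)=(f\ast R_{A}(x,\cdot))(x)$, for which the Peter--Weyl formula gives $\sigma_{A}(x,[\xi])=\widehat{R_{A}(x,\cdot)}(\xi)$. Thus the matrix symbol and the convolution kernel encode the same data, and the equivalence reduces to comparing the singular behaviour of $R_{A}(x,\cdot)$ near $e_{G}$ with symbol estimates stated in two different coordinate systems on the dual variable.

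For the forward direction (local $\Rightarrow$ global), I would fix a neighbourhood $V$ of $e_{G}$ on which $\exp:\mathfrak{g}\to G$ is a diffeomorphism, choose $\chi\in C^\infty_{0}(V)$ equal to $1$ near $e_{G}$, and decompose $R_{A}=\chi R_{A}+(1-\chi)R_{A}$; the smooth part contributes a symbol in $\mathscr{S}^{-\infty}$. Pulling the singular part back along $\exp$ turns it into the Schwartz kernel of a classical Euclidean pseudo-differential operator whose symbol lies in $S^{m}_{\rho,\delta}$ by assumption. Evaluating
\[
\sigma_{A}(x,[\xi])=\int_{G}R_{A}(x,y)\xi(y)^{*}dy
\]
and Taylor-expanding $\xi(\exp X)=I+d\xi(X)+O(|X|^{2})$ identifies, to leading order, the matrix symbol with the Euclidean symbol evaluated at dual frequencies of magnitude comparable to $\langle\xi\rangle\sim \lambda_{[\xi]}^{1/2}$. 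Left-invariant derivatives $\partial_{X}^{\beta}$ commute with right convolution and reproduce the $x$-derivatives of the Euclidean symbol, while admissible difference operators $\Delta_{\xi}^{\gamma}$ correspond under $\mathscr{F}$ to multiplication by smooth functions vanishing of order $|\gamma|$ at $e_{G}$, matching $\partial_{\xi}^{\alpha}$ in the chart modulo lower-order remainders. The estimates \eqref{HormanderSymbolMatrix} then follow from the Euclidean ones.

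For the converse, given $\sigma_{A}\in\mathscr{S}^{m}_{\rho,\delta}(G)$, I would reconstruct $A$ via \eqref{RuzhanskyTurunenQuanti} and verify that, in exponential coordinates at each base point, its Schwartz kernel is that of a Euclidean pseudo-differential operator with symbol in $S^{m}_{\rho,\delta}(U\times\mathbb{R}^n)$. Sums over $\widehat{G}$ are converted into oscillatory integrals over the cotangent fibre through Weyl's asymptotic $\#\{[\xi]:\lambda_{[\xi]}\leq R^{2}\}\sim c_{n}R^{n}$, and iterated difference operators are unwound into Euclidean $\partial_{\xi}^{\alpha}$ by admissibility. The main technical obstacle, and the precise reason the hypothesis $\rho\geq 1-\delta$ appears, is this dictionary between $\Delta_{\xi}^{\gamma}$ on $\widehat{G}$ and Euclidean $\partial_{\xi}^{\alpha}$: the $O(|X|^{2})$ remainders in the Taylor expansion of matrix coefficients couple orders of smoothness and decay, and only under $\rho\geq 1-\delta$ do they remain inside the correct symbol class after transport between coordinate systems. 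Once the dictionary is in place, a partition of unity on $G$ combined with the coordinate invariance of $\Psi^{m}_{\rho,\delta}(G,\textnormal{loc})$ finishes the proof.
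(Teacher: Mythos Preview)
The paper does not contain a proof of this theorem. It is stated in the preliminaries as a known result, with the proof attributed to the references \cite{Ruz,RuzhanskyTurunenWirth2014}; no argument appears in the text beyond the statement itself. Consequently there is no ``paper's own proof'' against which to compare your proposal.

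For what it is worth, your outline is broadly in the spirit of the arguments in the cited works: both directions pass through the right-convolution kernel $R_A(x,\cdot)$, exponential coordinates near $e_G$, and the dictionary between admissible difference operators $\Delta_\xi^\gamma$ on $\widehat{G}$ and Euclidean frequency derivatives $\partial_\xi^\alpha$. One caution: the actual proofs in \cite{Ruz,RuzhanskyTurunenWirth2014} do not literally convert sums over $\widehat{G}$ into oscillatory integrals via Weyl asymptotics as you suggest; rather, they characterise the singularity type of the kernel $R_A(x,\cdot)$ at $e_G$ directly through Calder\'on--Zygmund-type estimates (Taylor expansions of matrix coefficients combined with the kernel estimates equivalent to the symbol inequalities), which is a more robust route. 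Your Weyl-counting step is heuristic and would need substantial work to make rigorous, whereas the kernel-side argument avoids it entirely.
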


\subsection{The weak $\ell^1$ space $\mathcal{L}^{(1,\infty)}(\widehat{G})$}\label{weakl1space} It was introduced by the author and C. Del Corral in \cite{cdc20} the weak $\ell^1$ space on the unitary dual $\widehat{G}$ for the study of the Dixmier trace of left-invariant operators. To introduce this space let us consider the family of matrix-valued symbols
 \begin{equation*}
    \Sigma(\widehat{G}):=\{ \sigma:  \widehat{G}\rightarrow \cup_{\ell\in \mathbb{N}}\mathbb{C}^{\ell\times \ell}\}.
\end{equation*}Then, the weak $\ell^1$-space on $\widehat{G}$ is defined by the subset of functions $\sigma$ in  $\Sigma(\widehat{G})$ such that
\begin{equation}
    \Vert \sigma(\xi)\Vert_{\mathcal{L}^{(1,\infty)}(\widehat{G})}:=\lim_{N\rightarrow\infty}\frac{1}{\log N}\sum_{[\xi]:\langle \xi\rangle\leq N}d_{\xi}\textnormal{\bf{Tr}}(|\sigma_A(\xi)|)<\infty,
\end{equation}where the elliptic weight $\langle \xi\rangle:=(1+\lambda_{[\xi]})^{\frac{1}{2}}$ is the elliptic weight associated to the positive Laplace operator $\mathcal{L}_G.$

\section{Proof of Theorem \ref{WRGroup:1}}\label{Proof:main:the}

\begin{proof}[Proof of Theorem \ref{WRGroup:1}] 
Since $A$ is a classical pseudo-differential operator of order $m,$  its H\"ormander symbol (defined by localisations) admits the asymptotic expansion
\begin{equation*}
    \sigma_H(x,\eta)\sim \sum_{k=0}^{\infty}a_{m-k}(x,\eta),\,(x,\eta)\in T^{*}G\cong G\times \mathfrak{g}^{*},
\end{equation*}
in the sense that \begin{equation}
     \sigma_H(x,\eta)- \sum_{k=0}^{N}a_{m-k}(x,\eta)\in S^{-(N+1)+m}_{\textnormal{cl}}(G).
\end{equation}For any $k,$ let $A_{m-k}$ be a pseudo-differential operator associated to the H\"ormander symbol $a_{m-k},$ and let 
\begin{equation}
    \sigma_{m-k}:=\xi(x)^*A_{m-k}\xi(x),\,(x,[\xi])\in G\times \widehat{G},
\end{equation} be its  Ruzhansky-Turunen matrix-valued symbol. Then, it is clear that \begin{equation}
  (x,[\xi])\mapsto \sigma_A(x,[\xi])\sim \sum_{k=0}^{\infty}\sigma_{m-k}(x,[\xi]): G\times \widehat{G}\rightarrow \bigcup_{\ell\in \mathbb{N}}\textnormal{End}[\mathbb{C}^{\ell}],\,
\end{equation}in the sense that for any $N\in \mathbb{N},$
\begin{equation}
     \sigma_A(x,[\xi])- \sum_{k=0}^{N}\sigma_{m-k}(x,[\xi])\in S^{-(N+1)+m}_{\textnormal{cl}}(G\times \widehat{G}).
\end{equation}
 Now that we have proved the asymptotic expansion in \eqref{Eq:1}, let us compute the Wodzicki residue of $A$ in terms of such a decomposition.

Note that we can assume that the order $m$ of $A$ is $-n.$ Indeed, if  $m<-n,$ the Wodzicki residue of $A$ is zero and in such a case there is nothing to prove because the component $a_{-n}=0$ vanishes and the matrix-valued symbol of $A_{-n}$ is also the null mapping. Now, if $m\geq -n,$ we can decompose $A=A_1+A_2,$ 
 \begin{equation}
     \sigma_{A_1}(x,[\xi])\sim \sum_{k=k_0}^{\infty}\sigma_{m-k}(x,[\xi]),\,\sigma_{A_2}(x,[\xi])=\sum_{k=0}^{k_0}\sigma_{m-k}(x,[\xi]),\,(x,[\xi])\in G\times \widehat{G},
 \end{equation}where $k_0:=[n+m]\in \mathbb{N}$ is the integer part of $n+m.$ It is  the unique integer such that $-n-1< m-k_0\leq -n.$ In this case note that $\textnormal{res}(A)=\textnormal{res}(A_1)+\textnormal{res}(A_2)=\textnormal{res}(A_1),$ where $A_1$ is an operator of order $-n.$

So, it is suffice to prove Theorem \ref{WRGroup:1} when $m=-n.$ To do so, first, let us assume that the matrix symbol $\sigma$ of  $A$ is  positive  on any representation space, i.e.
\begin{equation}
  \forall (x,[\xi])\in G\times \widehat{G},\,\,\forall L\in \mathbb{C}^{d_\xi},\,(\sigma(x,[\xi])L,L)_{\mathbb{C}^{d_\xi}}\geq 0,
\end{equation}where $(L,M)_{\mathbb{C}^{d_\xi}}:=\sum_{j=1}^{d_\xi}L_{j}\overline{M}_j,$  stands for the inner product on $\mathbb{C}^{d_\xi}$. For any $x\in G,$ we denote by
\begin{equation*}
    A_{x}:=\textnormal{Op}[[\xi]\mapsto\sigma(x,[\xi])]
\end{equation*}the operator associated to the matrix symbol $\sigma(x,\cdot).$ In view of the compactness of $G,$ for any $x\in G,$  $A_x\in \Psi^{-n}_{cl}(G, \textnormal{loc})$ is also a classical pseudo-differential operator, see \cite[Page 482]{RuzhanskyTurunenWirth2014}. Let us assume also that for any $x,$ $A_x$ is self-adjoint on $L^2(G)$.
Note that for any $z\in \mathbb{C},$
\begin{equation}
    A(1+\mathcal{L}_G)^{\frac{z}{2}}\in \Psi^{-n+\textnormal{Re}(z)}_{cl}(G, \textnormal{loc}),\,\,\forall x\in G,\,A_x(1+\mathcal{L}_G)^{\frac{z}{2}}\in 
    \Psi^{-n+\textnormal{Re}(z)}_{cl}(G, \textnormal{loc}),
\end{equation}in view of the  composition properties of the H\"ormander  calculus, see \cite[Chapter XVIII]{Hormander1985III}.
The functions
\begin{align}
   z\mapsto f(z):=\textnormal{Tr}[A(1+\mathcal{L}_G)^{\frac{z}{2}}]
\end{align} and 
\begin{align}
   z\mapsto f(x,z):=\textnormal{Tr}[A_{x}(1+\mathcal{L}_G)^{\frac{z}{2}}],\quad x\in G,
\end{align}are analytic mappings on the left semi-plane
\begin{equation}
    \mathbb{C}_{<0}:=\{z\in \mathbb{C}:\textnormal{Re}(z)<0\}.
\end{equation}It is straightforward the existence of holomorphic extensions $\tilde{f}$ and $\tilde{f}(x,\cdot)$ of  $f$ and $f(x,\cdot),$ respectively, on the right semi-plane (see e.g. Lesch \cite{Lesch})
\begin{equation}
    \mathbb{C}_{\geq 0}:=\{z\in \mathbb{C}:\textnormal{Re}(z)\geq 0\}.
\end{equation}
The analytic continuations $\tilde{f}$ and $\tilde{f}(x,\cdot)$ have simple poles at $\mathbb{N}_0:=\{j\in \mathbb{Z}:j\geq 0\}.$ Let us make the principal observation of this proof.
For $z\in \mathbb{C}_{<0},$ we have the identity
\begin{align*}\label{ext:1}
    f(z):=\textnormal{Tr}[A(1+\mathcal{L}_G)^{\frac{z}{2}}]=\smallint\limits_G\sum_{[\xi]\in \widehat{G}}d_\xi\textnormal{Tr}[a(x,[\xi])\langle \xi\rangle^{z}]dx=\smallint\limits_Gf(x,z)dx.
\end{align*}Note that the function $$\tilde{F}(z):=\smallint\limits_{G}\tilde{f}(x,z)dx$$ is an analytic extension  of $$f(z)=\smallint\limits_Gf(x,z)dx,$$ and that in consequence $\tilde{F}|_{\mathbb{C}_{<0}}=\tilde{f}|_{\mathbb{C}_{<0}}=f.$ Because the domain $\mathbb{C}_{<0}$ clearly contains accumulation points in its interior, in view of the  identity theorem for analytic functions,  we conclude that  $\tilde{F}$ and  $\tilde{f}$ agree on the domain $\mathbb{C}_{\geq 0}\setminus \{j\in \mathbb{Z}:j\geq 0\}.$ All these facts allow us to compute the Wodzicki residue of $A.$ Indeed, by fixing  $0<\varepsilon<1/2,$ using that for any classical operator $B$ of order $-n,$ on $G,$ the Wodzicki residue can be computed as the residue at $z=0$ of the analytic extension of the complex function $\textnormal{Tr}[B(1+\mathcal{L}_G)^{\frac{z}{2}}]$, see  e.g. Lesch \cite[Eq. (1.2)]{Lesch}, that is
\begin{equation}
    \textnormal{res}(B)=\textnormal{res}_{z=0}\textnormal{Tr}[B(1+\mathcal{L}_{G})^{\frac{z}{2}}],\,B\in \Psi^{m}_{\textnormal{cl}}(G,\textnormal{loc}),\,m\in \mathbb{R},
\end{equation}
and by using  Fubini theorem, we have
\begin{align*}
    \textnormal{res}(A)=\textnormal{res}_{z=0}\tilde{f}(z)&=\frac{1}{2\pi i}\smallint\limits_{|z|=\varepsilon}{\tilde{f}(z)}dz=\frac{1}{2\pi i}\smallint\limits_{|z|=\varepsilon}\smallint\limits_{G}{\tilde{f}(x,z)}dxdz\\
    &=\frac{1}{2\pi i}\smallint\limits_{G}\smallint\limits_{|z|=\varepsilon}{\tilde{f}(x,z)}dxdz\\
    &=\smallint\limits_{G}\textnormal{res}_{z=0}{\tilde{f}(x,z)}dx\\
    &=\smallint\limits_{G}\textnormal{res}(A_x)dx,
\end{align*} where we have used the compactness of $G.$ 
We illustrate the sector of integration on the complex plane in the following picture (see Figure 2).
\begin{figure}[h]
\includegraphics[width=8cm]{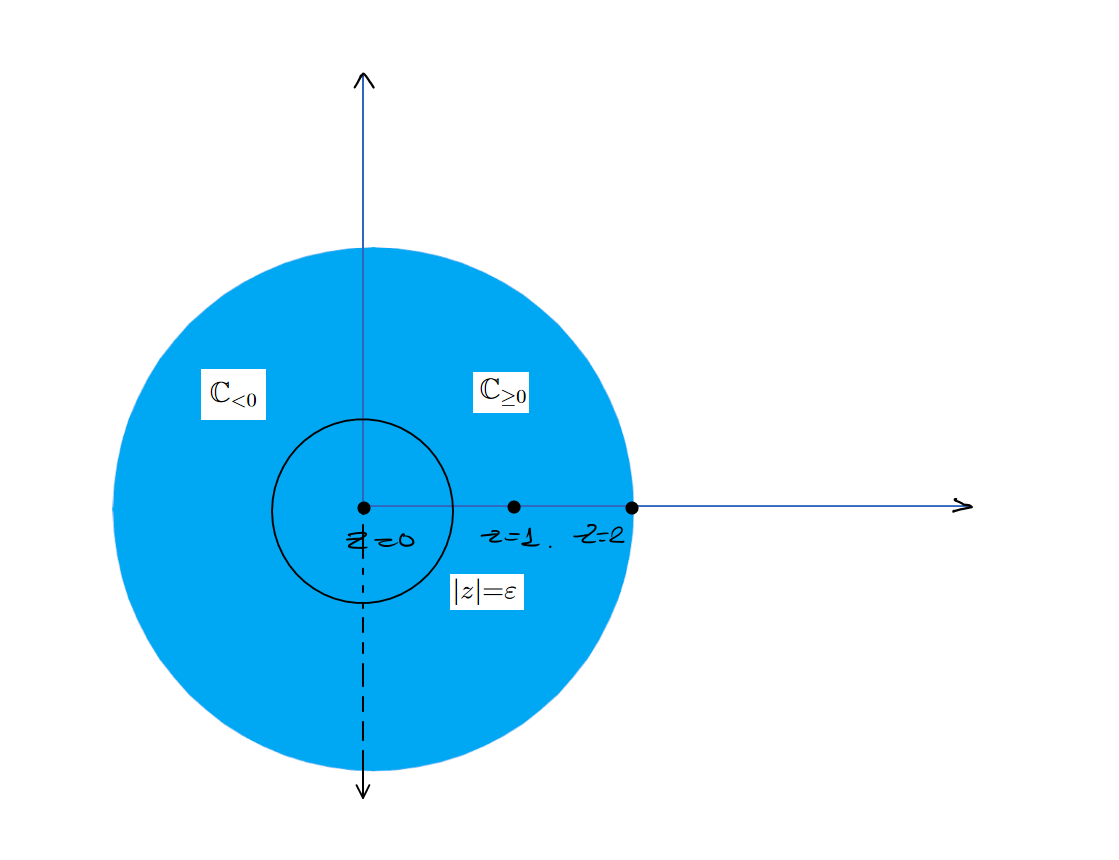}\\\label{Figure 2.}Figure 2.
\centering 
\end{figure}

The positivity of the symbol $a(x,\cdot)$ implies the positivity of the operator $A_{x}.$ Indeed, for any $x_0\in G,$
\begin{equation}
    \textnormal{Spect}[A_{x_0}]=\bigcup_{[\xi]\in \widehat{G}}\textnormal{Spect}[a(x_0,[\xi])]\subset\mathbb{R}^{+}_0.
\end{equation}Indeed, the spectrum of any matrix $a(x_0,[\xi])$ is positive. This fact implies that $A_{x_0}$ being self-adjoint is positive on $L^2(G).$
Then, using Theorem 1.1 in \cite{cdc20}, we have that
\begin{align*}
  \textnormal{res}(A_x)=\Vert a(x,\cdot)\Vert_{\mathcal{L}^{(1,\infty)(\widehat{G})}}.  
\end{align*}In consequence,
\begin{equation}\label{res:positiv:22}
    \textnormal{res}(A)=\smallint\limits_{G}\Vert a(x,\cdot)\Vert_{\mathcal{L}^{(1,\infty)(\widehat{G})}}dx. 
\end{equation} Now, we are going  to use the decomposition strategy  in \cite{CDR21W} for extending the formula in  \eqref{res:positiv:22} to the general case where the symbol is not necessarily a positive matrix on every representation space.

We use the decomposition of $A$ into its (self-adjoint) real and imaginary part,
\begin{equation*}
    \textnormal{Re}(A):=\frac{A+A^*}{2},\,\, \textnormal{Im}(A):=\frac{A-A^*}{2i},
\end{equation*}and the decomposition of $\textnormal{Re}(A)$ and $\textnormal{Im}(A)$ into their (self-adjoint) positive and negative parts,
\begin{eqnarray*}
  \textnormal{Re}(A)^{+}:=\frac{\textnormal{Re}(A)+|\textnormal{Re}(A)|}{2},\,\, \textnormal{Re}(A)^{-}:=\frac{|\textnormal{Re}(A)|-\textnormal{Re}(A)}{2},
\end{eqnarray*}
and 
\begin{eqnarray*}
  \textnormal{Im}(A)^{+}:=\frac{\textnormal{Im}(A)+|\textnormal{Im}(A)|}{2},\,\, \textnormal{Im}(A)^{-}:=\frac{|\textnormal{Im}(A)|-\textnormal{Im}(A)}{2}.
\end{eqnarray*}Now, the operator $A$ can be written as
\begin{align*}
    A&= \textnormal{Re}(A)+i\textnormal{Im}(A)\\
    &=\left(\textnormal{Re}(A)^{+}-\textnormal{Re}(A)^{-}\right)+i\left(\textnormal{Im}(A)^{+}-\textnormal{Im}(A)^{-}\right).
\end{align*}So, by the linearity of the Wodzicki residue $\textnormal{Tr}_\omega$ we have
\begin{align*}
    \textnormal{res} (A)&= \textnormal{res}(\textnormal{Re}(A))+i\textnormal{res}(\textnormal{Im}(A))\\
    &=\left(\textnormal{res}(\textnormal{Re}(A)^{+})-\textnormal{res}(\textnormal{Re}(A)^{-})\right)+i\left(\textnormal{res}(\textnormal{Im}(A)^{+})-\textnormal{res}(\textnormal{Im}(A)^{-})\right).
\end{align*}Now, we will exploit the subelliptic functional calculus in \cite{CR20} in order to compute the symbols of the positive operators $\textnormal{Re}(A)^{+},\textnormal{Re}(A)^{-},\textnormal{Im}(A)^{+},\textnormal{Im}(A)^{-}.$ Indeed, we have
\begin{align*}
    \sigma_{\textnormal{Re}(A)}(x,[\xi])= \sigma_{\frac{A+A^*}{2}}(x,[\xi])
    =\textnormal{Re}(\sigma_{-n}(x,[\xi]))+\textnormal{lower order terms},
\end{align*}
\begin{align*}
    \sigma_{\textnormal{Im}(A)}(x,[\xi])= \sigma_{\frac{A-A^*}{2i}}(x,[\xi])
    =\textnormal{Im}(\sigma_{-n}(x,[\xi]))+\textnormal{lower order terms},
\end{align*}
\begin{align*}
    \sigma_{\textnormal{Re}(A)^{+}}(x,[\xi])= \sigma_{\frac{\textnormal{Re}(A)+|\textnormal{Re}(A)|}{2}}(x,[\xi])
    =\textnormal{Re}(\sigma_{-n}(x,[\xi]))^{+}+\textnormal{lower order terms},
\end{align*}
\begin{align*}
    \sigma_{\textnormal{Re}(A)^{-}}(x,[\xi])= \sigma_{\frac{|\textnormal{Re}(A)|-\textnormal{Re}(A)}{2}}(x,[\xi])
    =\textnormal{Re}(\sigma_{-n}(x,[\xi]))^{-}+\textnormal{lower order terms},
\end{align*}and 
\begin{align*}
    \sigma_{\textnormal{Im}(A)^{+}}(x,[\xi])= \sigma_{\frac{\textnormal{Im}(A)+|\textnormal{Im}(A)|}{2}}(x,[\xi])
    =\textnormal{Im}(\sigma_{-n}(x,[\xi]))^{+}+\textnormal{lower order terms},
\end{align*}
\begin{align*}
    \sigma_{\textnormal{Im}(A)^{-}}(x,[\xi])= \sigma_{\frac{|\textnormal{Im}(A)|-\textnormal{Im}(A)}{2}}(x,[\xi])
    =\textnormal{Im}(\sigma_{-n}(x,[\xi]))^{-}+\textnormal{lower order terms}.
\end{align*}
Now, by applying \eqref{res:positiv:22}, we can eliminate the lower terms when computing the Wodzicki residue, and  we have the following  formulae

     \begin{equation*}
         \textnormal{res}(\textnormal{Re}(A)^{+})=\smallint\limits_{G}\Vert \textnormal{Re}(\sigma_{-n}(x,[\xi]))^{+} \Vert_{\mathcal{L}^{(1,\infty)}(\widehat{G})}dx
    \end{equation*}
    \begin{equation*}
         \textnormal{res}(\textnormal{Re}(A)^{-})=\smallint\limits_{G}\Vert \textnormal{Re}(\sigma_{-n}(x,[\xi]))^{-} \Vert_{\mathcal{L}^{(1,\infty)}(\widehat{G})}dx
    \end{equation*}
    \begin{equation*}
         \textnormal{res}(\textnormal{Im}(A)^{+})=\smallint\limits_{G}\Vert \textnormal{Im}(\sigma_{-n}(x,[\xi]))^{+} \Vert_{\mathcal{L}^{(1,\infty)}(\widehat{G})}dx
    \end{equation*}
    \begin{equation*}
         \textnormal{res}(\textnormal{Im}(A)^{-})=\smallint\limits_{G}\Vert \textnormal{Im}(\sigma_{-n}(x,[\xi]))^{-} \Vert_{\mathcal{L}^{(1,\infty)}(\widehat{G})}dx.
    \end{equation*}In view of the linearity of the Wodzicki residue we end the proof.
\end{proof}

\noindent{\textbf{Acknowledgement.}} I would like to thank M. Ruzhansky, J. Delgado, and C\'esar Del Corral Mart\'inez  for recent discussions.

This work and the references \cite{CDR21W,CR20}  were presented in the talk ``{\it{Global theory of subelliptic pseudo-differential operators and Fourier integral operators on compact Lie groups}}" by the author at the 13th ISAAC Congress in August 2021. The author would like to thank the organisers for their warm welcome.

\bibliographystyle{amsplain}

\begin{thebibliography}{GGK00}

\bibitem{cdc20} D. Cardona, C. Del Corral. The Dixmier trace and the non-commutative residue for multipliers on compact manifolds. In: Georgiev V., Ozawa T., Ruzhansky M., Wirth J. (eds) Advances in Harmonic Analysis and Partial Differential Equations. Trends in Mathematics. Birkh\"auser, Cham, (2020).



\bibitem{CDR21W} D. Cardona,  J. Delgado, M. Ruzhansky,  Dixmier traces, Wodzicki residues, and determinants on compact Lie groups: the paradigm of the global quantisation. arXiv:2105.14949

\bibitem{SGI} D. Cardona, S. Federico, M. Ruzhansky. Subelliptic sharp Gårding inequality on compact Lie groups.  arXiv:2110.00838


\bibitem{CR20} D. Cardona, M. Ruzhansky. Subelliptic pseudo-differential operators and Fourier integral operators on compact Lie groups. arXiv:2008.09651.





\bibitem{FedosovGolseLeichtnam} B. V. Fedosov, F. Golse,  E. Leichtnam, E. Schrohe. The noncommutative residue for manifolds with boundary. J. Funct. Anal., 142(1), 1--31, (1996). 






\bibitem{GrubbSchrohe} G. Grubb, E. Schrohe. Traces and quasi-traces on the Boutet de Monvel algebra. Ann. Inst. Fourier(Grenoble) 54(5),  1641--1696  xvii, xxii., (2004).


\bibitem{GrubbSeeley} G. Grubb, R. Seeley. Weakly parametric pseudodifferential operators and Atiyah-Patodi-Singer boundary problems. Invent. Math. 121, 481--529, (1995).

\bibitem{Hormander1985III} L. H\"ormander. { The Analysis of the linear partial differential operators.} Vol. III. Springer-Verlag, (1985).


\bibitem{Lesch} M. Lesch. On the Noncommutative Residue for Pseudodifferential Operators with log-Polyhomogeneous Symbols, Annals of Global Analysis and Geometry, 17,  151--187, (1999).







\bibitem{Paycha} S. Paycha. Regularised Integrals, Sums and Traces. University Lecture Series, 59. American Mathematical Society, Providence, RI, An analytic point of view, (2012).


\bibitem{PS07}  S. Paycha, S. Scott. A Laurent expansion for regularized integrals of holomorphic symbols. Geom. Funct. Anal. 17(2), 491--536, (2007).



\bibitem{Ruz}
M.~Ruzhansky and V.~Turunen.
\newblock {Pseudo-differential operators and symmetries. Background
  analysis and advanced topics}, volume~2 of {Pseudo-Differential
  Operators. Theory and Applications}.
\newblock Birkh{\"a}user Verlag, Basel, (2010).

\bibitem{RuzhanskyTurunenWirth2014} M. Ruzhansky, V. Turunen, J. Wirth. H\"ormander class of pseudo-differential operators on compact Lie groups and global hypoellipticity, J. Fourier Anal. Appl.  20,  476--499, (2014).

\bibitem{RuzhanskyWirth2015} M. Ruzhansky, J. Wirth. $L^p$ Fourier multipliers on compact Lie groups, Math. Z., 280,  621--642, (2015).


\bibitem{Scott} S. Scott. Traces and determinants of pseudodifferential operators, Oxford Mathematical Monographs, Oxford University Press, Oxford, 2010. 

\bibitem{Scrhohe} E. Schrohe. Complex powers of elliptic pseudodifferential operators. Integral Equations Operator Theory.  9, 337--354, (1986).

\bibitem{Wodzicki} 
M.~Wodzicki. Noncommutative Residue. I. Fundamentals, K-theory, Lecture Notes in Math., 1289, Springer, Berlin, 320--399, (1987).

\end{thebibliography}

\end{document}